\newtheorem{theorem}{Theorem}
\newtheorem{lemma}[theorem]{Lemma}
\newtheorem*{cond1}{Property F1}
\newtheorem*{cond2}{Property F2}
\theoremstyle{definition}
\newcommand{\CC}{\mathbb{C}}
\newcommand{\ti}{\tilde}
\title[On uniqueness of symplectic fillings of links of surface singularities]{On uniqueness of symplectic fillings \\ of links of some surface singularities}
\author{Olga Plamenevskaya}
\thanks{Partially supported by NSF grant DMS-1906260}
\address{Department of Mathematics, Stony Brook University, Stony Brook, NY,
11794}
\email{olga@math.stonybrook.edu}
\begin{document}

\begin{abstract} We consider the canonical contact structures on links of rational surface singularities with reduced fundamental cycle. These singularities can be characterized by their resolution graphs: the graph is a tree, and the weight of each vertex is no greater than its negative valency. The contact links are given by the boundaries of the corresponding plumbings. In \cite{PS}, it was shown that if the weight of each vertex in the graph is at most $-5$, the contact structure has a unique symplectic filling (up to symplectic deformation and blow-up); the proof was based on a symplectic analog of de Jong--van Straten's description of smoothings of these singularities. In this paper, we give a short self-contained proof of the uniqueness of fillings, via analysis of positive monodromy factorizations for planar 
 open books supporting these contact structures.
\end{abstract}

\maketitle

\section{Introduction}

In this note, we consider links of complex surface singularities, equipped with their canonical contact structures. Let $X \subset \CC^N$ be a singular complex surface with an isolated singularity at the origin. For small
$r > 0$, the intersection $Y = X \cap S_r^{2N-1}$ with the sphere 
$S_r^{2N-1}$ = $\{|z_1|^2 + |z_2 |^2 + \dots + |z_N|^2 = r\}$ is a
smooth $3$-manifold called the link of the singularity $(X, 0)$. The induced contact structure $\xi$ on $Y$ is the
distribution of complex tangencies to $Y$, and is referred to as the canonical or Milnor fillable contact
structure on the link. The contact manifold $(Y, \xi)$, which we will call the contact link, is independent
of the choice of $r$, up to contactomorphism.

Our main result, Theorem~\ref{main}, states that for a certain class of singularities, 
the canonical contact structure on the link has a unique symplectic filling (up to blow-up and symplectic deformation).
This theorem was originally proved in \cite{PS}; here, we will give a new proof, from a different perspective. 
The sufficient condition will be stated in terms of the dual resolution graph of the singularity. Recall that 
for  a normal surface singularity  $(X, 0)$, this graph is defined as follows. Consider a  resolution of the 
singularity, i.e. a proper birational morphism $\pi: \ti{X} \to X$ such that $\ti{X}$ is smooth.
 We can assume that 
the exceptional divisor $\pi^{-1} (0)$ has normal crossings. This means that $\pi^{-1} (0)= \cup_{v \in G} E_v$, where
the irreducible components $E_v$ are smooth complex curves that intersect transversally at double points only.  
The (dual) resolution graph encodes the topology of the resolution: the vertices  $E \in G$ correspond to the exceptional curves and are weighted by the self-intersection $E \cdot E$ of the corresponding curve, while the edges of $G$ record intersections of different irreducible components. Up to contactomorphism, the 
link of the singularity with its canonical contact structure can be reconstructed from the graph $G$ and the data of self-intersections and genera of exceptional curves,  as the boundary of the plumbing of symplectic disk bundles over surfaces according to $G$.

In this paper, we only work with {\em rational} singularities; then $G$ is always a tree, and each exceptional 
curve has genus 0. The following assumption plays the key role in this paper: for every exceptional curve $E$, 
we require that the self-intersection $E \cdot E$ and the valency $a(E)$ of the corresponding vertex in $G$ 
satisfy the inequality 
\begin{equation}\label{valency-weight}
a(E) \leq - E \cdot E. 
\end{equation}
Plumbing graphs with this property are sometimes referred to as ``graphs with no bad vertices''
in low-dimensional topology; a bad vertex, 
by definition, has valency greater than its negative weight. (The boundary of the corresponding plumbing is a Heegaard Floer L-space, \cite{OSz}.) If the dual resolution graph is a tree with the above property, $(X, 0)$ is a {\em rational singularity with reduced fundamental cycle}. In the literature, this type of singularities is also known as {\em minimal} singularities, \cite{Ko}.  

We will give a direct new proof of the following theorem, first established in~\cite{PS}: 

\begin{theorem}\label{main} \cite{PS} Suppose that $(X, 0)$ be a rational surface singularity with reduced fundamental cycle, and assume additionally that every exceptional curve in its resolution has self-intersection at most $-5$.  Then the contact link $(Y, \xi)$ of $(X, 0)$  has a unique minimal weak symplectic filling, which is Stein. 
\end{theorem}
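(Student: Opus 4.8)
The plan is to realize $(Y,\xi)$ as the boundary of a Lefschetz fibration over the disk in an essentially unique way, by combining a planar open book description of the canonical contact structure with Wendl's classification of fillings of planar contact manifolds. I would begin by recalling the planar open book supporting $(Y,\xi)$ that can be read off directly from the plumbing graph $G$: the page $\Sigma_G$ is a multiply connected planar surface (a sphere with boundary), and the monodromy $\phi_G$ is a product of \emph{positive} Dehn twists $\tau_c$ along curves $c$, each encircling a collection of holes prescribed by a connected subtree of $G$, with multiplicities governed by the weights $-E_v\cdot E_v$ and the valencies $a(E_v)$; under the hypothesis $E_v\cdot E_v\le -5$ these multiplicities are large. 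Since $G$ is a tree of rational curves with negative definite intersection form, $Y$ is a rational homology sphere and $H^2(Y;\RR)=0$; by the now-standard deformation argument (Eliashberg; Ohta--Ono), every weak symplectic filling of $(Y,\xi)$ is then symplectic-deformation equivalent to a strong one, so it is enough to classify minimal strong fillings up to symplectic deformation.

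Next I would apply Wendl's structure theorem for fillings of planar contact manifolds: since $\xi$ is planar, every strong symplectic filling of $(Y,\xi)$ is symplectic-deformation equivalent to the blow-up of an allowable Lefschetz fibration over the disk with regular fiber $\Sigma_G$ extending the open book $(\Sigma_G,\phi_G)$, and the filling is minimal exactly when it is itself such an allowable Lefschetz fibration. Such a Lefschetz fibration carries a Stein structure (Loi--Piergallini; Akbulut--Ozbagci), and its ordered tuple of vanishing cycles is a positive factorization of $\phi_G$ in the mapping class group of $(\Sigma_G,\partial\Sigma_G)$ into Dehn twists along essential (non-null-homotopic) curves, well defined up to Hurwitz moves and global conjugation and determining the filling up to symplectic deformation; blowing up corresponds to adjoining a null-homotopic vanishing cycle. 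Consequently Theorem~\ref{main} reduces to the rigidity statement: \emph{up to Hurwitz moves and global conjugation, $\phi_G$ admits a unique positive factorization into essential Dehn twists, namely the standard one coming from $G$.} (Existence of a minimal Stein filling is then automatic — either from the standard factorization itself or from Milnor fillability — so the content is uniqueness.)

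The core of the argument is this rigidity statement, and it is here that the assumption $E_v\cdot E_v\le -5$ is indispensable. I would proceed in three steps. First, \emph{constrain the vanishing cycles}: using that $\phi_G$ fixes each boundary component, the constraint on the total twisting about each hole, the action of $\phi_G$ on the relative homology $H_1(\Sigma_G,\partial\Sigma_G;\ZZ)$, and bounds on geometric intersection numbers of the $c$'s with a fixed family of properly embedded arcs, one shows that every curve occurring in a positive factorization is isotopic to one in an explicit finite list — essentially, one encircling the holes indexed by a connected subtree of $G$. Second, \emph{peel off an outermost twist}: fixing a boundary component and a nearby properly embedded arc, a first/last-twist argument (in the spirit of Wendl's and Etnyre's analyses of planar open books) forces the factorization to contain a distinguished outermost Dehn twist; this twist is moved to the end of the word by Hurwitz moves, matched with and cancelled against the corresponding factor of the standard factorization, after which the argument recurses on a smaller planar open book obtained by capping that hole — the combinatorial shadow of deleting a leaf or shortening a chain of $G$. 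Third, \emph{block substitutions}: at each stage one checks that the large multiplicities forced by $E_v\cdot E_v\le -5$ prevent any lantern relation, and more generally any chain or star relation among positive Dehn twists on a subsurface — the very relations underlying symplectic rational blowdowns — from being applied, since the precise local configuration of boundary-parallel and enclosing twists that such a relation consumes never arises. (For weights $-2,-3,-4$ such substitutions genuinely occur and non-unique fillings appear, so some lower bound on the weights is necessary.)

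The principal obstacle I anticipate is this third step, intertwined with the bookkeeping in the first two: a many-holed planar surface still supports a rich web of relations among positive Dehn twists, and one must rule out \emph{every} way of exploiting them to reach a non-standard factorization, invoking $E_v\cdot E_v\le -5$ exactly where the combinatorics would otherwise fail. A more routine point is to confirm that the blow-ups in Wendl's theorem cannot affect uniqueness of the \emph{minimal} filling: a blow-up contributes a null-homotopic vanishing cycle, which is excluded by allowability, so minimal fillings correspond precisely to the positive factorizations classified above — and there is only one.
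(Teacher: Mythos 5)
Your framework coincides with the paper's: take the Gay--Mark planar open book read off from $G$, invoke Wendl's theorem to reduce classification of minimal fillings to classification of positive factorizations of the monodromy on the fixed planar page, and then argue combinatorially that the factorization is unique. But the heart of the proof is the combinatorial rigidity statement, and there your three-step sketch leaves a genuine gap and, in two places, points in the wrong direction. Your step~1 asserts that a finite list of admissible vanishing cycles can be extracted from the action on relative homology and from intersection-number bounds, without saying how; what actually does the work in the paper is the lantern-relation-invariant \emph{multiplicity} $m(v)$ and \emph{joint multiplicity} $m(v_1,v_2)$ of holes, a carefully chosen triple of marked holes $v_j^1,v_j^2,v_j^3$ in each annular region $V_j$, and a double induction on the chain length $k$ and the number of vertices. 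Your step~3 frames the weight hypothesis as ``blocking'' the lantern relation, but the paper's induction deliberately passes through auxiliary graphs where the distinguished end-vertex has weight $-4$ (this happens after capping off holes), and at that stage the lantern relation \emph{does} produce an alternative shape for the factorization (Property~F2). The induction step must then show that both alternatives (F1 and F2) force the outer boundary twist once the capped-off holes are restored with weight $\le -5$. Viewing the argument as ``rule out every relation'' misses this essential feature.

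There is also a conflation in your step~2. The paper separates two distinct claims: Lemma~\ref{lem1}, that the homology classes of the curves in any positive factorization agree with the standard ones, and Lemma~\ref{lem2}, that for a monodromy admitting a factorization into twists about pairwise \emph{disjoint} curves, homological equivalence of factorizations already implies equality. Lemma~\ref{lem2} is where right-veering enters: one picks arcs $\eta_i$ disjoint from the standard (disjoint) twist system, observes that the monodromy fixes these arcs, and uses right-veering to conclude that any positive factorization must have support disjoint from the $\eta_i$, allowing one to cut the page and recurse. Your ``first/last-twist, Hurwitz move, cancel and recurse'' plan gestures at this but does not identify the disjointness of the standard factorization as the key hypothesis, nor the right-veering mechanism that converts homological data into isotopy data. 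Without that mechanism the recursion does not close, since homology alone does not determine a simple closed curve on a many-holed disk.
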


In the special case where the resolution graph is star-shaped with three legs, this fact is proved in~\cite[Theorem 2.7, Remark 2.8]{BhSt}, by a different method.

Symplectic and Stein fillings of links of surface singularities are of interest because of the connection to algebro-geometric questions, namely to the smoothings of the singularity. The Milnor fiber of each smoothing of $(X, 0)$ gives a Stein filling of its link $(Y, \xi)$; another Stein filling can be provided by the minimal resolution of the singularity, after deforming the symplectic form. (Rational surface singularities are always smoothable, with an ``Artin smoothing component'' whose Milnor fiber gives the same Stein filling as the resolution. In particular, for the singularities in Theorem~\ref{main}, the filling can be viewed as the resolution or as the Milnor fiber for the Artin smoothing.)   An important question is whether {\em all} Stein fillings of a given surface singularity arise in this way \cite{Nem}. Although this correspondence breaks down when the singularity is sufficiently complicated \cite{Akh-Ozbagci1, Akh-Ozbagci2, PS}, the answer is positive for certain simple classes of singularities. Namely, all Stein fillings come   
from Milnor fibers or the minimal resolution for $(S^3,\xi_{std})$ \cite{Eliash}, for links of simple and simple elliptic  singularities \cite{OhtaOno1, OhtaOno2},   for lens spaces (links of  
cyclic quotient singularities) \cite{Li, NPP-cycl}, and in general for quotient singularities \cite{PPSU,Bhu-Ono}. Theorem~\ref{main} significantly extends this list. 

Our interest to the question of Theorem~\ref{main} was motivated by a (very special case of) a conjecture of Koll\'ar on deformations of rational surface singularities, \cite{Kol2}. The conjecture asserts  that every exceptional curve has self-intersection at most $-5$ in the resolution of a rational singularity, then  the base space of a
semi-universal deformation of this singularity has a unique component. For the case of rational singularities with reduced fundamental cycle, the conjecture was established by de Jong--van Straten \cite{dJvS}; in particular, it follows that under the hypotheses of Theorem~\ref{main}, the singularity has a unique smoothing component. Our Theorem~\ref{main} gives the symplectic analog of this statement. 

The proof we gave in~\cite{PS} comes as a side product of the theory developed in that article, where we describe symplectic fillings of the corresponding class of singularities via a symplectic analog of de Jong--van Straten's construction. Fillings are encoded by certain configurations of symplectic disks in $\CC^2$; we were then able to apply a lemma of de Jong--van Straten to establish ``combinatorial uniqueness'' of the corresponding disk arrangements, and then finish the argument via topological considerations.   

In this paper, we will instead give a direct proof of Theorem~\ref{main}, working with open book factorizations. As a corollary, we get a symplectic proof that all smoothings of the corresponding singularity are diffeomorphic. We will assume that the reader is familiar with the basics of open book decompositions for contact 3-manifolds; see \cite{Etn-ob} for a survey. Under the hypotheses of the theorem, the canonical contact structures on the links of singularities admit planar open books. (This follows from a construction of Gay--Mark \cite{GayMark}, see Section~\ref{s2}. Planarity was also a key ingredient that allowed us to build an analog of the de Jong--van Straten theory in \cite{PS}.) In the planar case, symplectic fillings can be studied via theorems of Wendl and Niederkruger--Wendl \cite{We, NiWe}: every minimal symplectic filling is symplectic deformation equivalent to a Lefschetz fibration over a disk with the same planar fiber $P$. The classification of fillings then reduces to enumerating positive factorizations of the monodromy of the given open book.

In general, finding all positive factorizations of the monodromy is a daunting task, even in the planar case. The question is much easier if one only seeks to determine the image of the Dehn twists of the factorization in the {\em abelianization} of the mapping class group of the page. This is equivalent to finding the {\em homology classes} of the curves about which the Dehn twists are performed; we also disregard the order of the twists. 
This easier question can be studied by counting how many times the Dehn twists enclose each hole in the planar page, and how many times they enclose each pair of holes. (The planar page is a disk with holes, and we say that a simple closed curve in a disk with holes encloses a hole if the the curve separates the hole from the outer boundary component of the disk.)  If $P$ is planar, any two factorizations of the boundary-fixing monodromy $\phi:P\to P$ can be connected by a sequence of lantern relations, and it follows that the number of Dehn twists enclosing a given hole (or a given pair of holes) is {\em independent} of the factorization of $\phi$. Thus, we can introduce the multiplicity $m(v)$ of a hole $v$ with respect to  the monodromy $\phi$, and similarly the joint multiplicity $m(v_1, v_2)$ of a pair of holes $v_1, v_2$. Knowing these multiplicities, one can attempt to describe possible other factorizations of
$\phi$, by examining the combinatorics of how the Dehn twists can enclose the holes. This method was introduced in \cite{PVHM} to classify fillings of certain lens spaces. 

Once we understand the Dehn twists in the factorization at the level of homology classes of the curves, additional information is needed to find the isotopy classes of the curves. In the case at hand, this step is possible because the given monodromy admits a positive factorization into Dehn twists about {\em disjoint} curves.  

In \cite{PS}, the combinatorial part of the proof was based on the description of fillings via a symplectic analog of the de Jong--van Straten construction. We then used the result of \cite{dJvS} asserting uniqueness of a combinatorial solution for a certain curve arrangement problem. For the second part of the proof, we gave a direct mapping class argument.  The purpose of the note is to give a direct multiplicity-count argument for the first part, see Lemma \ref{lem1}. For the second part, we essentially repeat the reasoning from \cite{PS}; this argument, based on right-veering properties, is given in Lemma \ref{lem2} for completeness.

It is interesting to note that our direct argument for the combinatorics of Dehn twists follows the strategy of 
\cite[Theorem 6.23]{dJvS}: we translate their proof from the incidence matrices to mutiplicities of holes, and provide some extra details where needed. 

{\bf Acknowledgements.} Originally, we proved Theorem~\ref{main} in joint work with Laura Starkston \cite{PS}. The alternative proof given here resulted from my attempts to understand the relation of the combinatorial constructions of \cite[Theorem 6.23]{dJvS} to open book factorizations. 

This article was written for Proceedings of the 2020 BIRS workshop on 
Interactions of Gauge Theory and Contact and Symplectic Topology. I have benefited greatly from the series of the BIRS workshops on this topics and would like to thank the organizers of all the past workshops of the series.

\section{Proof of Theorem~\ref{main}} \label{s2}

To begin, we recall the construction of the open books supporting the canonical contact structures for 
the class of singularities that satisfy~\eqref{valency-weight}, \cite[Theorem 1.1]{GayMark}.
Starting with the plumbing graph $G$, the construction given by Gay and Mark produces a planar Lefschetz fibration compatible with the symplectic resolution of a rational singularity $(X, 0)$ with reduced fundamental 
cycle. (The sympectic structure on the plumbing can be deformed to the corresponding Stein structure.) 
We describe the  induced planar open book on the link $(Y, \xi)$. To construct the page of the Gay--Mark open book, 
take a sphere $S_E$ for each vertex  $E \in G$ and cut out  $-a(E) - E \cdot E \geq 0$ disks out of this sphere. (As before, $a(E)$ is the valency
of the vertex $E$; the number of disks is non-negative by~\eqref{valency-weight}.) 
Next, make a connected sum of these spheres with holes  by adding a connected sum neck for each 
edge of $G$.  For a sphere $S_E$
corresponding to the vertex $E$, the number of necks equals the number of edges adjacent to $E$, i.e. its valency $a(E)$. 
The resulting surface $S$ has genus 0 because $G$ is a tree. See Figure~\ref{pic:GayMark} for an example. 
The open book monodromy is given by the product of positive Dehn twists around each of the holes and around the meridians of the necks. We will call this product the {\em standard} factorization of the Gay-Mark monodromy.

\begin{figure}[htb]
		\centering
		\bigskip
		\labellist
		\small\hair 2pt
		\pinlabel $-6$ at 187 38
		\pinlabel $-5$ at 207 38
		\pinlabel $-2$ at 228 38
		\pinlabel $-4$ at 158 38
		\pinlabel $-3$ at 177 57
		\pinlabel $-2$ at 163 68
		\pinlabel $-3$ at 200 68
		\pinlabel $-3$ at 172 18
		\endlabellist
		\includegraphics[scale=1.7]{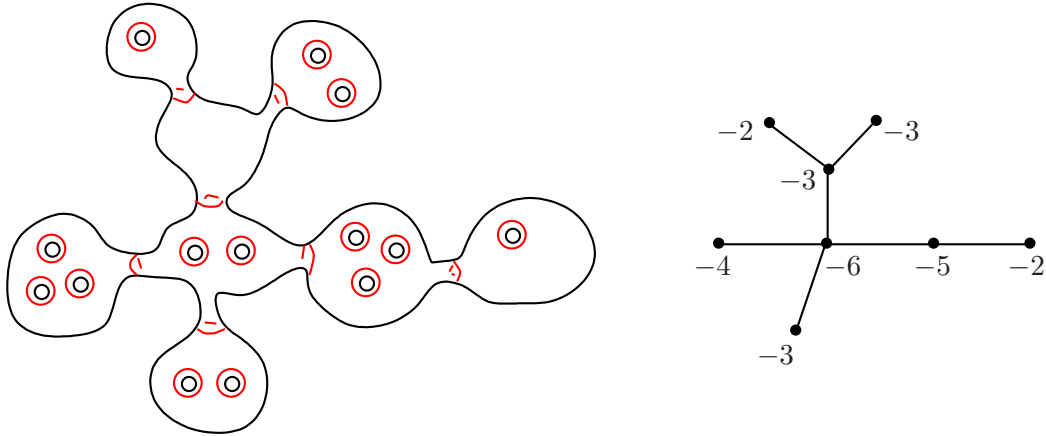}
		\caption{The Gay--Mark open book supporting the canonical contact structure on the link of the singularity with dual resolution graph shown on the right. The page of the open book has genus 0 and is constructed from the spheres with holes coresponding to the vertices of the graph. Each sphere is connected to the other spheres by necks that correspond to the edges; the total number of holes and necks for each sphere equals the negative self-intersection of the vertex. 
		The monodromy is the product of the positive Dehn twists about the boundaries of the holes and the meridians of the necks; these curves are shown in red.}
		\label{pic:GayMark}
	\end{figure}

To examine positive factorizations of this open book, we first we put the resolution graph in the following special form, as in \cite{dJvS}.  We choose the vertices $E_1, E_2, \dots E_k$ and partition the remaining vertices into subsets $R_2, \dots R_k$ as shown in Figure~\ref{arrange-graph}, so that for any vertex $F \in R_j$, the length $l(E_j, F)$ of the chain from $F$ to $E_j$ satisfies $l(E_j, F)\leq j-1$. Here, the length 
of chain means the number of edges; for example, the statement means that every vertex in $R_2$ is directly connected to $E_2$ by a single edge. This can always be achieved via the following procedure. We choose $E_1$ to be the endpoint of a longest chain $C$ in the graph; then $E_1$ is necessarily a leaf vertex of $G$. Let $E_2$ be its adjacent vertex, and let $E_3$ be the vertex on the chain $C$ that 
is adjacent to $E_2$. Removing $E_2$ from $G$, we get one connected component consisting of $E_1$,
another that contains $E_3$, and possibly a number of other vertices in the remaining components.
Let $R_2$ be the set of these remaining vertices. Each vertex $F \in R_2$ must be a leaf vertex connected to $E_2$ (otherwise we can build a chain longer than $C$ by going to $R_2$ instead of $E_1$); thus the condition $l(E_2, F)\leq 1$ is satisfied. If $E_3$ is a leaf vertex, it can be included in $R_2$, and the procedure is over. If $E_3$ is not a leaf vertex, and every other vertex in  $G \setminus (E_1\cup R_2)$ is connected to $E_3$ by a path of at most 2 edges, then all remaining vertices can be included in $E_3$. Otherwise we consider the vertex $E_4$ preceding $E_3$ on the path $C$. Removing $E_3$, we set aside the two components of $G\setminus E_3$ that contain $E_1$ resp. $E_4$ and let $E_3$ to be the set of all remaining vertices. Again, since no chain in the graph $G$ can be longer than $C$, every vertex  $F \in R_3$ must be connected to $E_3$ by a path of no more than 2 edges, satisfying $l(F, E_3)\leq 2$. We continue this process to define $E_5$, $R_4$, etc, stopping when we reach $k$ such that all the remaining vertices in $G$ can be connected to $E_k$ by a path no longer than $(k-1)$ edges, and thus placed in $R_k$. See Figure~\ref{arrange-graph}. We will say that vertices of the graph are {\em conveniently arranged} if they are partitioned into subsets as above.

\begin{figure}[htb]
		\centering
		\bigskip
		\labellist
		\small\hair 2pt
		\pinlabel $E_1$ at 3 4
		\pinlabel $E_2$ at 28 4
		\pinlabel $E_3$ at 72 4
		\pinlabel $E_4$ at 113 4
		\pinlabel $R_2$ at 18 38
		\pinlabel $R_3$ at 58 52
		\pinlabel $R_4$ at 107 58
		\endlabellist
		\includegraphics[scale=2]{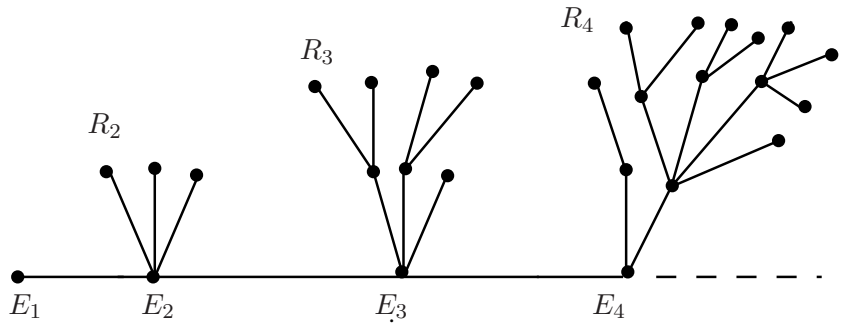}
		\caption{A graph with conveniently arranged vertices: after removing $E_1, E_2, \dots$, the remaining vertices are partitioned into subsets $R_2, R_3, \dots$. Every vertex in the set $R_j$ connects to the vertex $E_j$ by a chain with fewer than $j$ edges.}
		\label{arrange-graph}
	\end{figure}

For the resolution graph $G$ with conveniently arranged vertices, we build the Gay-Mark open book as in Figure~\ref{pic:GayMark}. We will identify 
the planar page of this open book with a disk with holes, so that outer boundary of the disk corresponds to the boundary of one of the holes associated to the vertex $E_1$. This identification and the choice of the outer boundary component of the disk will be fixed from now on, for the statement and the proof of Lemma~\ref{lem1}. In the standard factorization of the Gay-Mark monodromy, there is a sequence of the Dehn twists $D_1, D_2, \dots D_k$ around a nested collection of curves 
$\gamma_1, \dots \gamma_k$, such that 

(i) $D_1$ is the twist around the outer boundary component $\gamma_1$ of the page, and therefore $D_1$ encloses all the holes;

(ii) $D_j$ is the twist around the neck $\gamma_j$ between $E_{j-1}$ and $E_j$ for $j=2, \dots, k$, so that $D_j$ encloses all 
the holes corresponding to $E_j, R_j, E_{j+1}, R_{j+1}, \dots, E_k, R_k$.

The curves $\{\gamma_j\}_{j=1}^k$ cut the disk page into annular domains $\{V_j\}_{j=1}^{k-1}$ such that 
$V_j$ is bounded by $\gamma_j, \gamma_{j+1}$ for $j=1, \dots, k-1$, and  a disk $V_k$ bounded by $\gamma_k$. 
It follows that the joint multiplicity of any two holes from $V_j$ is at least $j$. 
In the Gay-Mark construction, the holes from  $V_j$ are associated to vertices $E_j, R_j$ of the graph $G$.

We now make a choice of a certain ordered subset of holes in the page.  
Because the valency of $E_1$ is one, and the self-intersection $E_1 \cdot E_1$ is at most $-5$, the corresponding annular domain $V_1$  contains  $-E_1\cdot E_1-2 \geq 3$  holes. We label three 
of these holes as $v_1^1$, $v_1^2$, $v_1^3$. Next, again because self-intersections of vertices are at most $-5$,
we can pick three holes  $v_2^1, v_2^2, v_2^3$ in the domain $V_2$. We require that $v_2^1, v_2^2, v_2^3$
satisfy an additional condition $m(v_2^r, v_2^s)= 2$: if $R_2$ is non-empty, we make sure  that no two holes are 
in the same branch of $R_2$ to avoid higher joint multiplicities. For $j=3, \dots k$, we proceed to pick $v_j^1, v_j^2, v_j^3$ in the domain $V_j$, choosing different branches of $R_j$ if $R_j$ is non-empty, so that $m(v_j^r, v_j^s)= j$ for any pair 
of indices $r, s=1, 2,3$.  By construction, we have 
\begin{equation}\label{mult}
m(v_i^r, v_j^s)= \min (i, j)
\end{equation}
for any two chosen holes $v_i^r, v_j^s$. 

The choice of the holes $v_1^1, v_1^2, v_1^3, \dots, v_k^1, v_k^2, v_k^3$ will be fixed. 
By construction, the standard factorization of the Gay--Mark open book satisfies

\begin{cond1} The factorization includes Dehn twists $D_1, \dots, D_k$ such that 
\begin{itemize}
 \item the Dehn twist $D_j$ encloses the holes $v_i^1, v_i^2, v_i^3$ for all $i\geq j$.
 \end{itemize}
\end{cond1} 
This is illustrated in Figure~\ref{nested-twists}. Note that we have only listed the Dehn twists that correspond to the edges of the chain $E_1, \dots, E_k$. The Dehn twists that correspond to edges the sets 
$R_j$ are not listed above; for each $R_j$, the corresponding Dehn twists enclose holes in the domain $V_j$. While these Dehn twists may be nested, the hypothesis that chains of edges in $R_j$ connecting to $E_j$ have length at most $j-1$ gives a bound on a number of twists enclosing any hole $w$ in $V_j$ in the standard factorization: there are at most $j$ nested Dehn twists inside $V_j$ (including the Dehn twist around the boudary of $w$), in addition to $D_1, \dots, D_j$. This means that in $\phi$, the multiplicity 
of the hole $w$ is at most $2j$.

For an inductive step in our proof, we will consider graphs $G$ satisfying a weaker hypothesis: when the vertices of $G$ are conveniently arranged, we require that the self-intersection  $E_k\cdot E_k \leq -4$, whereas $E \cdot E\leq -5$ for every other vertex $E \in G$. Note that in this case, we can still choose a labeled collection of holes 
$v_1^1, v_1^2, v_1^3, \dots, v_k^1, v_k^2, v_k^3$ as above.  Because $E_k \cdot E_k=-4$, 
we can use the lantern relation to replace the product of $D_k$ and three other Dehn twists (around holes or necks in the sphere corresponding to $E_k$) by three Dehn twists $D_{k}^1, D_{k}^2, D_{k}^3$. For this new factorization, we have:

\begin{cond2}  The factorization includes Dehn twists $D_1, \dots, D_{k-1}, D_{k}^1, D_{k}^2, D_{k}^3$ such that 
\begin{itemize}
\item   $D_j$ encloses the holes $v_i^1, v_i^2, v_i^3$ for all $i\geq j$, for each for each $j=1, \dots, k-1$;
\item  $D_{k}^1$ encloses  $v_k^2, v_k^3$ but not $v_k^1$;
\item $D_{k}^2$ encloses  $v_k^1, v_k^3$ but not $v_k^2$;
\item $D_{k}^3$ encloses  $v_k^1, v_k^2$ but not $v_k^3$.
\end{itemize}
\end{cond2}

\begin{figure}[htb]
		\centering
		\bigskip
		\labellist
		\small\hair 2pt
		\pinlabel $v_1^1$ at 28 88
		\pinlabel $v_1^2$ at 47 93
		\pinlabel $v_1^3$ at 68 92
		\pinlabel $v_2^1$ at 33 75
		\pinlabel $v_2^2$ at 48 79
		\pinlabel $v_2^3$ at 73 79
		\pinlabel $v_{k-1}^1$ at 30 42
		\pinlabel $v_{k-1}^2$ at 49 49
		\pinlabel $v_{k-1}^3$ at 72 47
		\pinlabel $v_k^2$ at 61 39
		\pinlabel $v_k^1$ at 50 27
		\pinlabel $v_k^3$ at 61 28
		\pinlabel $D_1$ at 11 82
		\pinlabel $D_2$ at 19 76
		\pinlabel $D_{k-1}$ at 25 59
		\pinlabel $D_k$ at 34 33
		
		\pinlabel $v_1^1$ at 179 88
		\pinlabel $v_1^2$ at 194 91
		\pinlabel $v_1^3$ at 223 93
		\pinlabel $v_2^1$ at 176 76
		\pinlabel $v_2^2$ at 202 80
		\pinlabel $v_2^3$ at 225 77
		\pinlabel $v_{k-1}^1$ at 180 54
		\pinlabel $v_{k-1}^2$ at 203 57
		\pinlabel $v_{k-1}^3$ at 218 56
		\pinlabel $v_k^1$ at 196 28
		\pinlabel $v_k^2$ at 193 39
		\pinlabel $v_k^3$ at 209 38
		\pinlabel $D_1$ at 155 83
		\pinlabel $D_2$ at 164 79
		\pinlabel $D_{k-1}$ at 170 61
		\pinlabel $D_k^3$ at 181 40
		\pinlabel $D_k^1$ at 220 39
		\pinlabel $D_k^2$ at 199 20
		
		\endlabellist
		\includegraphics[scale=1.9]{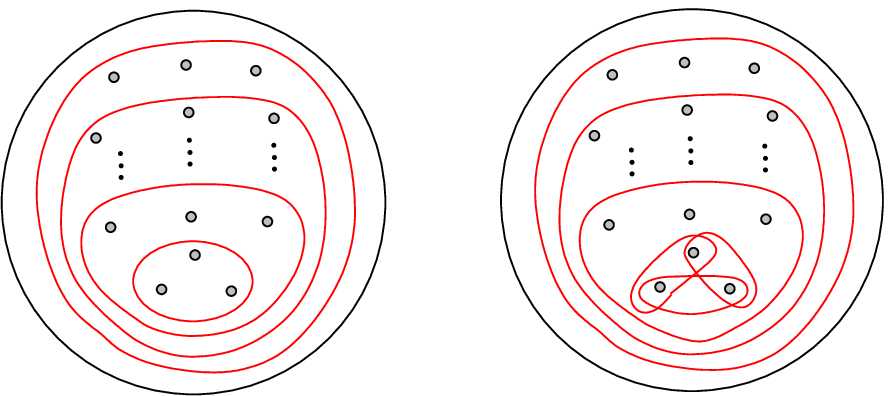}
		\caption{Property F1 (left) and Property F2 (right) for a chosen subset of holes $v_1^1, v_1^2, v_1^3, \dots v_k^1, v_k^2, v_k^3$ and the Dehn twists that enclose them in a factorization. Note that we only require that the {\em homology classes}  of the curves are as schematically shown; isotopy classes may look different from the picture.} 
		\label{nested-twists}
	\end{figure}

Under the hypotheses of the following lemma,  we will show that an {\em arbitrary} factorization of the monodromy of the Gay--Mark open book must have Property F1 or Property F2. This will be a step in the argument 
showing that any monodromy factorization must be standard if the self-intersection of each vertex of $G$ is at most $-5$.

\begin{lemma} \label{lem1} Suppose that the vertices of the graph $G$ are conveniently arranged, with distinguished vertices $E_1, E_2, \dots E_k$, and the corresponding sets $R_2, \dots, R_k$.  Assume that the self-intersections of all vertices in the graph  are at most $-5$, except possibly $E_k$, which has self-intersection at most $-4$. Suppose also that there is a collection of holes $\{v_j^1, v_j^2, v_j^3\}_{j=1}^k$, chosen as above.  Then 
 \begin{enumerate}

\item If $E_k \cdot E_k = -4$, then {\em every} monodromy factorization includes

\begin{enumerate}
\item the Dehn twist around the outer boundary component of the page;

\item a collection of Dehn twists (containing the twist around the outer boundary) that has Property F1 or Property F2.
\end{enumerate}

\item If $E_k \cdot E_k \leq -5$, then every monodromy factorization is homologically equivalent to the standard one.
\end{enumerate} 
 
\end{lemma}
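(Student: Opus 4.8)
The plan is to prove both parts of the lemma simultaneously, by induction on the number of vertices of $G$. The engine of the argument is the fact recalled in the introduction: since any two positive factorizations of the monodromy $\phi$ of a planar open book are connected by lantern relations, the multiplicity $m(v)$ of each hole $v$ and the joint multiplicity $m(v,w)$ of each pair are the same for every factorization, hence may be read off from the standard Gay--Mark factorization. From that reading I record the inputs I will use: $m(v_1^r)=2$, $m(v_1^r,v_1^s)=1$, and in fact $m(v_1^r,w)=1$ for \emph{every} hole $w\neq v_1^r$ (the only twists of the standard factorization that enclose a hole of the leaf vertex $E_1$ are the twist about that hole and the twist about the outer boundary $\gamma_1$); moreover $m(w)=2$ occurs \emph{only} for holes $w$ in the annular region $V_1$, while $m(w)\le 2j$ whenever $w$ lies in $V_j$; and $m(v_i^r,v_j^s)=\min(i,j)$ as in \eqref{mult}. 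The base case is $k=1$, where $G$ is a single vertex and the page is a disk with $-E_1\cdot E_1-1\ge 3$ holes: the only positive factorizations of the product of its boundary twists are the standard one and, when $E_1\cdot E_1=-4$, the one obtained by a single lantern move (when $E_1\cdot E_1\le -5$ no lantern move is available, since no curve enclosing a proper nonempty sub-collection of holes occurs among the boundary twists), and one checks directly that these give Property F1, Property F2, and homological standardness, respectively.

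For the inductive step, fix an arbitrary positive factorization and record, for each of its Dehn twists, the set of holes it encloses. First analyze the twists meeting $\{v_1^1,v_1^2,v_1^3\}$: since $m(v_1^r)=2$ and every other hole has joint multiplicity exactly $1$ with $v_1^r$, the two twists through $v_1^r$ enclose complementary sets of the remaining holes, and chasing this for $r=1,2,3$ leaves two possibilities. Either \textbf{(A)} some twist $\alpha$ encloses all three of $v_1^1,v_1^2,v_1^3$ and three further twists enclose them one apiece --- and the joint-multiplicity constraints then force the enclosure sets to be $S(\alpha)=(\text{all holes})\setminus W$ and $\{v_1^r\}\cup W$ for $r=1,2,3$, for one common set $W$ of ``extra'' holes disjoint from $V_1$; or \textbf{(B)} there are three twists whose enclosure sets are exactly the three pairs $\{v_1^r,v_1^s\}$, with no other holes. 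Case (B) can only occur when $k=1$: for $k\ge 2$ there is a hole $v_2^1$ with $m(v_1^1,v_2^1)=1$, yet in case (B) no twist through $v_1^1$ encloses $v_2^1$; and when $k=1$ this case is precisely Property F2. In case (A), a further multiplicity count --- using $m(v_j^r,v_j^s)=j$, the bounds $m(w)\le 2j$ on $V_j$, and the fact (forced by $m(w)=2\Rightarrow w\in V_1$) that $\alpha$ must enclose \emph{all} of $V_1$ --- shows $W=\emptyset$. Hence $\alpha$ is isotopic to $\gamma_1$, giving conclusion (1a), and the remaining three twists are the boundary twists about $v_1^1,v_1^2,v_1^3$.

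It remains to reduce to a smaller graph. The same bookkeeping applied to the other holes of $V_1$ (there are at least three, since $E_1\cdot E_1\le -5$) shows that the only twists of the factorization supported inside $V_1$ are the boundary twists about its holes. Removing these together with $T_{\gamma_1}$ and filling in $V_1$, the product of the remaining twists becomes the Gay--Mark monodromy of $G\setminus E_1$, in which the neck $\gamma_2$ plays the role of the outer boundary; self-intersections are unchanged, the last distinguished vertex $E_k$ and its weight are untouched, so the hypotheses of the lemma persist. After conveniently rearranging $G\setminus E_1$ and choosing its labeled hole system compatibly with that of $G$, the inductive hypothesis gives homological standardness of the reduced factorization in the case $E_k\cdot E_k\le -5$ (whence the original is homologically standard), and Property F1 or F2 in the case $E_k\cdot E_k=-4$; prepending the extracted twist $T_{\gamma_1}$ (which encloses every $v_i^r$) together with the boundary twists about the holes of $V_1$ then yields Property F1 or F2 for $G$.

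The step I expect to be the main obstacle --- and, I believe, the place where one must ``provide extra details'' beyond the corresponding argument of \cite{dJvS} --- is the verification that the set $W$ of extra holes in case (A) is empty, equivalently that the only twists living in $V_1$ are boundary-parallel. This requires juggling the joint-multiplicity identities $m(v_i^r,v_j^s)=\min(i,j)$, the upper bounds $m(w)\le 2j$, and the ``distinct branches of $R_j$'' built into the choice of the $v_j^r$, all at once: for a hypothetical $u\in W$ one must locate $u$ in some $V_j$ and derive a contradiction by comparing the number of twists through $u$ forced by $u\in W$ with the number forced by the joint multiplicities $m(u,v_j^r)$. A secondary nuisance is making the reduction to $G\setminus E_1$ precise --- re-identifying the pages after filling in $V_1$, re-choosing the distinguished vertices and the labeled hole system compatibly, and checking that the conveniently-arranged condition and the correspondence of the $v_j^r$ survive the surgery.
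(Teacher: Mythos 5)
Your overall scheme differs from the paper's: you run a single induction on the number of vertices and try to produce the outer-boundary twist directly from multiplicity counts at the leaf $E_1$, whereas the paper runs a double induction (on the chain length $k$ \emph{and} on the number of vertices). The paper first caps off the holes of $E_{k+1}\cup R_{k+1}$ to get a shorter-chain graph $G'$ in which $E_k$ acquires weight $-4$ (this is the raison d'\^etre of the $-4$/Property F2 alternative in the statement), applies the chain-length induction to get the nested twists $T_1,\dots,T_k$ with Property F1 or F2, and only then runs the multiplicity count: the nested family realizes all joint multiplicities $m(v_i^r,v_j^s)=\min(i,j)$, which forces the auxiliary twists $\tau_j^s$ through a hypothetical bad hole $v$ to be pairwise distinct, giving $m(v)\ge 4k-1>2k+2$. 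Only after the outer twist is secured does the paper remove $E_1$, and the ``flip'' rearrangement of $G\setminus E_1$ (which you defer as a ``secondary nuisance'') is carried out in detail to keep the $-4$ vertex in the distinguished last slot.

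The genuine gap is exactly the step you flag as the main obstacle: the claim that $W=\emptyset$ in your case (A), i.e.\ that every factorization contains the full outer-boundary twist. Your sketch -- locate a hypothetical $u\in W$ in some $V_j$ and compare the twists through $u$ forced by $u\in W$ with those forced by $m(u,v_j^r)$ -- does not go through as stated, for two reasons. First, the upper bound $m(u)\le 2j$ grows with $j$, while the local data you have at that point (the twist $\alpha$ and the $\beta$-twists with enclosure sets $\{v\}\cup W$) forces only a bounded number of twists through $u$; to force roughly $4k$ twists through $u$ you need the joint multiplicities $m(u,v_i^s)=\min(i,j)$ to be realized by \emph{distinct} new twists, and distinctness is what the paper extracts from the inductively supplied nested family $T_1,\dots,T_k$. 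Second, nothing in your argument prevents the deeper selected holes $v_i^s$ ($i\ge 3$, say) from themselves lying in $W$; in that case their joint multiplicities with $u$ are already realized by the $\beta$-twists and force no new twists through $u$ at all, so the count collapses (one can still kill particular configurations by playing pairs such as $v_2^s\notin W$, $v_3^{s'}\in W$ against each other, but a systematic treatment of all possible $W$ is precisely the hard combinatorial content of the lemma). Your induction on vertex number alone cannot supply the missing tool, because the inductive conclusion for $G\setminus E_1$ becomes available only \emph{after} the outer twist and the $V_1$ boundary twists have been split off -- which is the very statement in question. The paper's own direct version of your argument is its $k=2$ base case, where it works because all multiplicities are at most $4$; for general $k$ the paper replaces it by the capping-off induction. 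So as written the proposal is circular at its core step; to repair it you would either have to supply a complete combinatorial argument ruling out every nonempty $W$, or reintroduce something like the paper's second induction on the chain length.
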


\begin{proof} We will build an inductive argument, with double induction on $k$  and the number of vertices in the graph. 

The base of induction is given by $k=1$ and $k=2$. The case of $k=1$ corresponds to lens spaces and was treated in \cite{PVHM}. (This is an easy exercise on computing multiplicities.) The case $k=2$ is straightforward but more tedious; we check it after explaining the induction step.

For now, assume that $k \geq 2$, and that the statement of the lemma is established for all graphs where the chain of distinguished vertices $E_1, E_2, \dots $ has length at most $k$. Consider the graph $G$ with convenienly arranged vertices with a longer chain  $E_1, \dots, E_{k+1}$, and the remaining vertices partitioned into the  sets $R_2, \dots, R_{k+1}$.  Take a new graph $G'$,  obtained from $G$ by removing all vertices of $R_{k+1}$ and $E_{k+1}$, increasing by $1$ the self-intersection of $E_k$, and keeping the same self-intersection for all other vertices. The Gay-Mark open books $(P, \phi)$ and $(P', \phi')$, representing respectively the contact links of singularities with graphs $G$ and $G'$, are related as follows. To obtain the page  $P'$ from $P$, we cap off 
all the holes in $P$ associated to $E_{k+1}$ and $R_{k+1}$; the boundary-fixing diffeomorphism $\phi$ then induces $\phi'$. (Note that $E_k \cdot E_k$ increases by 1 in the graph $G'$ since the corresponding subsurface in the page has fewer boundary components now: removing $E_{k+1}, R_{k+1}$ is the same as pinching off the neck connecting the $E_k$-sphere to the $E_{k+1}$-sphere.) 

Fix an arbitrary factorization $\Phi$ of the Gay-Mark open book for $G$. When the holes in $P$ are capped off to obtain $P'$, $\Phi$ induces the factorization $\Phi'$ for the open book $(P', \phi')$. Since by assumption 
$E_k \cdot E_k \leq -5$ in $G$, the self-intersection of the corresponding vertex is at most $-4$ in $G'$.
The induction hypothesis applies to the graph $G'$, and therefore, the conclusion of the lemma holds for the factorization $\Phi'$ of the monodromy $\phi'$. In particular, there is a Dehn twist
$T'=T_1'$ around the outer boundary component of $P'$ in the factorization $\Phi'$, and moreover, there are Dehn twists $T_2', T_3', \dots, T_{k-1}'$, and $T'_k$ (or $T'_{k,1}$, $T'_{k,2}$, $T'_{k,3}$) that have Property F1 (or, respectively, Property F2). These Dehn twists must be induced by the corresponding Dehn twists
$T=T_1, T_2, \dots, T_{k-1}$ and $T_k$ (or $T_{k,1}$, $T_{k,2}$, $T_{k,3}$) in the factorization $\Phi$ of 
$\phi:P\to P$.

To show that $\Phi$ has a Dehn twist around the outer boundary component of $P$, we need to check that $T$ encloses all the holes corresponding to $E_{k+1}$ and $R_{k+1}$
that were removed from $P$ to obtain $P'$; we already know that $T$ encloses all the holes that $P$ inherits from $P'$.
 For the sake of contradiction, let $v=v_{k+1}^s$ be a hole associated to  $E_{k+1}$ or $R_{k+1}$,
and suppose that it is not enclosed by $T$.  First assume that  the factorization $\Phi'$ has property~F1, so that the factorization $\Phi$ includes Dehn twists $T=T_1, T_2, \dots, T_{k-1}, T_k$ as above.
We examine the  multiplicities of the selected holes. 
Because these multiplicities can be computed from  the standard factorization of $\phi:P\to P$, by~\eqref{mult} we know that the joint multiplicity $m(v, v^s_j)=j$ for $i=1, 2, 3$ and $j=, \dots, k$.

If $T=T_1$ does not enclose $v$, the holes $v$ and $v_j^s$ are enclosed together by at most $j-1$ of the Dehn twists  $T_1, T_2, \dots, T_{k-1}, T_k$. Even if $v$ is enclosed by all of $T_2, \dots, T_{k-1}, T_k$,
it follows that there must be an additional Dehn twist $\tau_j^s$ enclosing 
both $v$ and $v_j^s$. Observe that the Dehn twists $\tau_j^i$ must be all distinct (that is, $\tau_j^s=\tau_i^r$ only if $i=j, r=s$): the joint multiplicity $m(v_j^s, v_i^r)$ of any two distinct holes $v_j^s, v_i^r$ is already realized by $T_1, T_2, \dots, T_{k-1}, T_k$, so no additional Dehn twist can enclose them both. It follows that the hole $v$ must be enclosed by at least $3k$ distinct Dehn twists $\tau_1^1, \tau_1^2, \tau_1^3, \dots, \tau_{k}^1, \tau_k^2, \tau_k^3$ in the factorization $\Phi$, 
in addition to $k-1$ Dehn twists $T_2, \dots, T_{k-1}, T_k$. It is not hard to see that if $v$ is not enclosed by some of the twists among $T_2, \dots, T_{k-1}, T_k$, then each missing twist will need to be replaced by several 
individual twists to achieve $m(v, v^s_j)=j$. It follows that $v$ is enclosed by at least $3k+k-1=4k-1$ twists.
To obtain a contradiction, we compute the multiplicity $m(v)$ in the monodromy $\phi$. The hole $v$ is associated to $E_{k+1}$ or to some vertex $E$ in $R_{k+1}$; in the standard factorization of $\phi$, it is enclosed by the small twist around the hole $v$, by the outer boundary twist, as well as by the Dehn twists corresponding to the edges in the chain in $G$ from $E_1$ to $E_{k+1}$ and then the chain 
from $E_{k+1}$ to $E$, if the latter chain is present. 
Since $E \in R_{k+1}$, and by construction the length of the chain from $E_{k+1}$ to 
any vertex in $R_{k+1}$ is at most $k$, we see that $m(v)\leq 2k+2$. This is a contradiction since
 $2k+2<4k-1$ for $k\geq 2$.

Similar reasoning leads to the same conclusion in the case where the factorization $\Phi'$ has Property~F2   instead of Property~F1. As above, we see that if $v$ is not enclosed by $T=T_1$, there must be at least $3(k-1)$  distinct Dehn twists $\tau_1^1, \tau_1^2, \tau_1^3, \dots, \tau_{k-1}^1, \tau_{k-1}^2, \tau_{k-1}^3$ in the factorization $\Phi$ to achieve $m(v, v^s_j)=j$ for $j=1, \dots, {k-1}$, $s=1,2,3$.
The holes  $v_k^1, v_k^2, v_k^3$ need a bit more attention. Indeed, when $v$ is not enclosed by $T=T_1$, there are at most $k-2$ twists 
among $T_2, \dots, T_{k-1}$ enclosing $v$ and $v_k^s$ for each $s=1, 2, 3$. The joint multiplicity 
$m(v, v_k^s)=k$ can be achieved if $v$ is enclosed by all three twists $T_{k,1}$, $T_{k,2}$, $T_{k,3}$, in addition to all of  $T_2, \dots, T_{k-1}$ and $\tau_1^1, \tau_1^2, \tau_1^3, \dots, \tau_{k-1}^1, \tau_{k-1}^2, \tau_{k-1}^3$. This would give $m(v)\geq 3k+(k-1)$ as before. Another case is when one of the twists $T_{k,1}$, $T_{k,2}$, $T_{k,3}$ (say $T_{k,3}$) does not enclose $v$. In that case, two additional twists, distinct from all of the above, enclosing respectively $v$ and $v_k^1$ (but not $v_k^2$ or $v_k^3$) and $v$ and $v_k^2$(but not $v_k^1$ or $v_k^3$), are needed  (again for the reason of joint multiplicities). 
This would still yield $m(v)> 3k+(k-1)$. As in the case of Property~F1, the multiplicity of $v$ will be even higher if $v$ is not enclosed by some of the twists among $T_2, \dots, T_{k-1}, T_k$. As above, we get a contradiction since 
$m(v)\leq 2k+2$, as computed from the standard factorization of $\Phi$.

At this point, we have shown that the Gay--Mark open book for the resolution graph $G$ must have an outer boundary twist $T$ in any factorization $\Phi$, assuming that the smaller graph $G'$ satisfies the conclusion of the lemma. To prove the other statements of the lemma for $G$, we will now reduce to a different smaller graph $\ti{G}$.

In the page $P$, consider all the holes associated to the vertex $E_1 \in G$.  We know that these holes have joint multiplicity $1$ with any other hole in $P$, thus they cannot be enclosed by any twists other than $T$ that involve several holes. Since there is one boundary Dehn twist $\delta_i$ around each of  these holes in the standard factorization, so that the multiplicity of each hole is $2$, these boundary twists must be present in $Phi$ as well.  It follows that the factorization $\Phi$ has the form $\Phi=T \delta_1 \delta_2 \dots \delta_m \circ \ti{\Phi}$, 
where $\ti{\Phi}$ is supported in $\ti{P}= P \setminus V_1$, the part of the page $P$ associated to $G\setminus E_1$. Remove the vertex $E_1$ and its connecting edge from the graph $G$, and consider the  resulting graph $\ti{G}$, keeping the same self-intersections of vertices. Clearly, $\ti{\Phi}$ gives a factorization of the Gay--Mark open book associated to $\ti{G}$. 
If self-intersections of all vertices of $G$ are at most $-5$, the same holds for $\ti{G}$. The graph 
$\ti{G}$ has fewer vertices, so by the induction hypothesis, the factorization $\ti{\Phi}$ must be standard. It follows that the factorization $\Phi$ of the Gay-Mark open book for $G$ is standard as well, proving part (2) of the lemma. 

To make the induction step work for part (1), assume that in $G$, the vertex $E_{k+1}$ has self-intersection at most $-4$, while  all the other vertices have self-intersection at most $-5$. When we remove $E_1$ to form the graph $\ti{G}$, the vertices of $\ti{G}$ may no longer be conveniently arranged; after vertices are rearranged, 
we need to have the $(-4)$ vertex at the appropriate position to apply the induction hypothesis.
We must rearrange the vertices of $\ti{G}$ to have a chain $\ti{E}_1, \ti{E}_2, \dots$, with the other vertices partitioned into the sets $\ti{R}_1, \ti{R}_2, \dots$, so that the length of any chain in $E_j, R_j$ is at most $j-1$. Consider the vertex $E_2$ in $G$. If 
$R_2$ was not empty in $G$, we can pick a vertex of $R_2$ to play the role $\ti{E}_1$ in $\ti{G}$, let $\ti{R}_2$
to be the remaining vertices of $R_2$, and set $\ti{E}_2=E_2$, $\ti{E}_3=E_3, \dots$, $\ti{R_3}=R_3$,  $\ti{R_4}=R_4, \dots$. In this case, the $(-4)$ vertex $E_{k+1}$ in $G$ becomes the vertex $\ti{E}_{k+1}$ at the end of the chain $\ti{E}_1, \ti{E}_2, \dots$ in $\ti{G}$, as required. If $R_2$ is empty in $G$, we check if $R_3$ has a chain of (maximum possible) length $2$. If so, we rearrange the vertices: let $\ti{E}_3=E_3$, 
pick $\ti{E}_1$ and $\ti{E_2}$ forming a length 2 chain in $R_3$ (with $\ti{E}_1$ being the leaf vertex). 
Let $\ti{R_2}$ consist of all vertices other than  $\ti{E}_1$, $\ti{E}_3$, and let $\ti{R_3}$ consist of all remaining vertices of $R_3$, together with the old vertex $E_2$. For $j \geq 4$, we have $\ti{E}_j=E_j$, 
$\ti{R_j}=R_j$, so the $(-4)$ vertex remains in the right place for the graph $\ti{G}$, which is now conveniently arranged. 
See Figure~\ref{flip-graph}.
If there are no chains of length 2 in $R_3$,
we similarly examine $R_4$ to see if there are chains of length 3. If so, we flip the graph to put this length 3 chain into the position of vertices  $\ti{E}_1, \ti{E_2}, \ti{E_3}$, make the vertices $E_2, E_3$ and all of $R_3$ to be part of the new set $R_4$; the graph is now conveniently arranged, and the $(-4)$ vertex does not move.  If there are no length 3 chains in $R_4$, we look at $R_5$, etc. To summarize, the above procedure means that we can conveniently rearrange the vertices of $\ti{G}$ without moving the $(-4)$ vertex whenever for some $j=2, \dots, k$, the set $R_j$ in $G$ has a chain of the maximum possible length $j-1$. If such a chain does not exist, we check if $R_{k+1}$ has a chain of length $k$. If so, this chain will become the new chain $\ti{E}_1, \ti{E_2}, \dots$, the old vertices $E_2, \dots, E_k$ as well as the sets $R_2, \dots, R_k$ will all be in 
$\ti{R}_{k+1}$, and the graph will be conveniently rearranged without moving the $(-4)$ vertex $E_{k+1}=\ti{E}_{k+1}$. Lastly, if each chain $R_j$, $j=2, \dots, k+1$ has length at most $j-2$ in $G$, we can set 
$\ti{E}_1=E_2$, $\ti{E}_2=E_3, \dots$, $\ti{E}_{k}=E_{k+1}$ and $\ti{R}_2=R_3, \dots$, $\ti{R}_{k}=R_{k+1}$, so that the graph $\ti{G}$ will be conveniently arranged, and the vertex $\ti{E}_{k} \in \ti{G}$ will have self-intersection $-4$.  
\begin{figure}[htb]
		\centering
		\bigskip
		\labellist
		\small\hair 2pt
		\pinlabel $G$ at 2 157
		\pinlabel $\ti{G}$ at 2 57
		\pinlabel $E_1$ at 2 105
		\pinlabel $E_2$ at 30 105
		\pinlabel $E_3$ at 58 105
		\pinlabel $E_4$ at 100 105
		\pinlabel $E_{k+1}$ at 172 105
		\pinlabel $R_3$ at 30 155
		\pinlabel $R_4$ at 90 160
		\pinlabel $R_{k+1}$ at 161 165
		\pinlabel $\ti{E}_1$ at 6 3
		\pinlabel $\ti{E}_2$ at 30 3
		\pinlabel $\ti{E}_3$ at 58 3
		\pinlabel $\ti{E}_4$ at 100 3
		\pinlabel $\ti{E}_{k+1}$ at 172 3
		\pinlabel $\ti{R}_2$ at 21 36
		\pinlabel $\ti{R}_3$ at 52 44
		\pinlabel $\ti{R}_4$ at 90 57
		\pinlabel $\ti{R}_{k+1}$ at 161 60
		\endlabellist
		\includegraphics[scale=1.7]{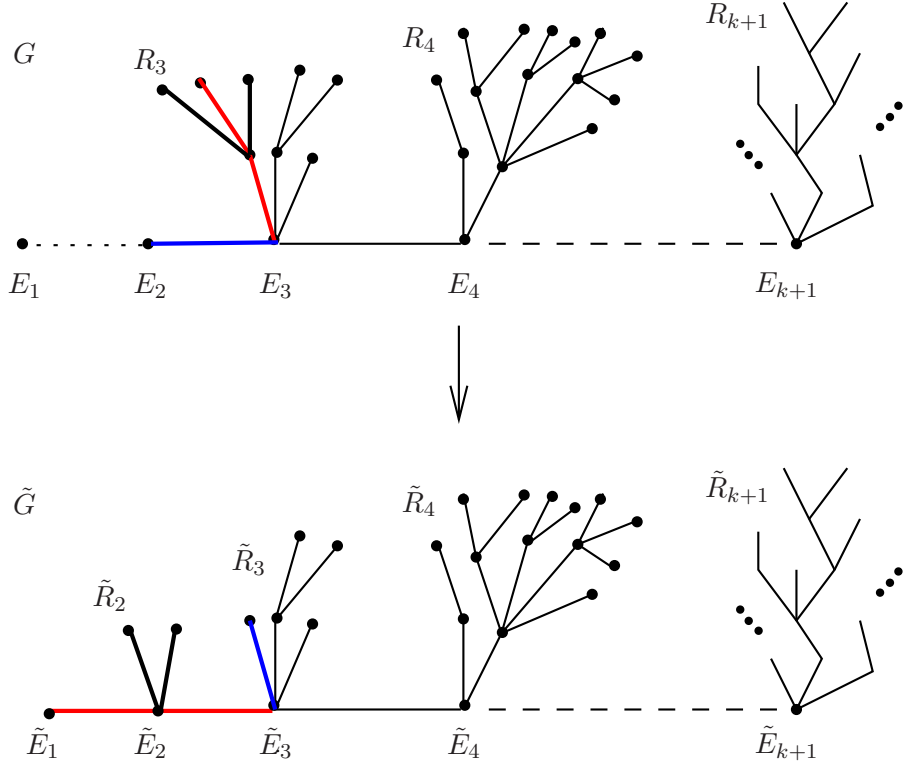}
		\caption{After removing the vertex $E_1$ from $G$, we flip a chain in the graph to make the new graph $\ti{G}$ conveniently arranged, while keeping in place the vertex $E_k$. The graph $G$ is shown at the top, the new graph $\ti{G}$ at the bottom. The picture illustrates the situation where $R_2$ is empty, and we flip 
		a length 2 chain in $R_3$, together with all the edges and vertices attached to this chain in $R_3$. The vertex $E_2$ becomes part of the new set~$\ti{R}_3$.}
		\label{flip-graph}
	\end{figure}

	With the rearrangement in place, part (1) follows by induction: if the factorization $\Phi$ of the Gay-Mark open book for $G$ has the form $\Phi=T \delta_1 \delta_2 \dots \delta_m \circ \ti{\Phi}$, where $\ti{\Phi}$ is the factorization 
of the Gay-Mark open book for $\ti{G}$, and part (1) of the lemma holds for $\ti{\Phi}$, then clearly the same is true for $\Phi$.

We now return to the base of induction and check the case  $k=2$. In this case, the graph is star-shaped with legs of length 1, with $E_2$ in the center. As above, the page $P$ is identified with the disk whose outer boundary corresponds to one of the holes associated to $E_1$;  
there are at least three holes $v_1^1, v_1^2, v_1^3$ in $V_1$. First, we claim that any factorization has a Dehn twist enclosing all of these holes. If not, we must have two distinct Dehn twists $\tau_1$ and $\tau_2$,
$\tau_1$ enclosing $v_1^1, v_3^1$ and $\tau_2$ enclosing $v_2^1, v_3^1$, because $m(v_1^r, v_1^s)=1$.  Since $m(v_3^1)=2$ and there are at least 4 holes in $P$ having joint multiplicity 2 with $v_3^1$, one of these Dehn twists, say $\tau_1$, contains an additional hole $w$. But then there must be two additional Dehn twists in the factorization, 
enclosing  respectively $v_2^1, v_1^1$ and $v_2^1, w$, which is impossible since $m(v_2^2)=2$. Thus, there is a Dehn twist $\tau$ enclosing $v_1^1, v_1^2, v_1^3$. We would like to show that $\tau$ encloses all the holes in $P$. Suppose not, and let $w$ be a hole not enclosed by $\tau$. Then there must be distinct Dehn twists $\tau_, \tau_2, \tau_3$, 
enclosing respectively $w,v_1^1$, $w, v_1^2$, $w, v_1^3$. Since $m(v_1^1)=m(v_1^2)=m(v_1^3)=2$, there cannot be any other Dehn twists enclosing  $v_1^1, v_1^2, v_1^3$, and since $m(v_1^r, v)=1$ for $r=1,2,3$ and any other hole $v$, every hole $v$ must be either in $\tau$ or in {\em each} of $\tau_1, \tau_2, \tau_3$ (but not simultaneously in $\tau$ and $\tau_i$, $i=1,2,3$). For the hole $w$, two cases are possible: (1) $w$ belongs to $E_2$, in which case $m(w)=3$, 
so there are no Dehn twists except $\tau_1, \tau_2, \tau_3$ enclosing $w$; or (2) $w$ belongs to $R_2$, in which case 
$m(w)=4$, and there is exactly one additional Dehn twist $\tau'$. In either case, there must exist another hole $w'$ such that $m(w, w')=2$; however, we can only get $m(w, w')=3$ (if $w'$ is in all three of $\tau_1, \tau_2, \tau_3$) or 
$m(w, w')=1$ (if $w'$ is in $\tau$ and $\tau'$). It follows that the Dehn twist $\tau$ must enclose all holes in $P$. We conclude that the factorization includes Dehn twists around the curves that are homologous, and therefore isotopic, to the boundaries of all the holes associated to $E_1$. As above, these can be removed from consideration. The same argument works for any leaf vertex of the graph, reducing the question to the situation of only one vertex, $E_2$. 
This is the case $k=1$ representing an open book for a lens space as in \cite{PVHM}; if $E_2\cdot E_2\leq -5$, there is a unique factorization, and if $E_2 \cdot E_2 =-4$, then the only other option for the homology classes of curves comes  from the lantern relation.   
\end{proof}

By Lemma~\ref{lem1}, we now know that under the hypotheses of Theorem~\ref{main}, the Dehn twists in every positive factorization are performed about the curves in the same {\em homology} classes as the Dehn twists in the standard factorization. We now show that the curves are in the same {\em isotopy} classes.

\begin{lemma} \label{lem2} Let $(P, \phi)$ be a planar open book whose monodromy $\phi$ admits a factorization $\Phi$ into a product of positive Dehn twists about {\em disjoint} simple closed curves in $P$. Suppose that $\Phi'$ is another positive 
factorization of $\phi$, such  that $\Phi$ is homologically equivalent to $\Phi'$.  Then the factorizations $\Phi$ and $\Phi'$ are the same, up to the order of Dehn twists. 
\end{lemma}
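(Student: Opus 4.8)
The plan is to use the \emph{right-veering} technology of Honda--Kazez--Mati\'c combined with the hypothesis that $\Phi$ consists of Dehn twists about \emph{disjoint} curves. The starting point is the observation that in a planar surface $P$ (a disk with holes), the homology class of a simple closed curve $\gamma$ — equivalently, the subset of holes that $\gamma$ encloses — does not determine $\gamma$ up to isotopy in general, but it \emph{does} pin down the boundary of a subsurface: the enclosed holes together with the outer boundary cut out a uniquely determined (up to isotopy) connected planar subsurface $P_\gamma \subset P$ only when that collection of holes can be ``innermost-nested,'' which is precisely the situation forced by disjointness. So the first step is to record what disjointness of the curves in $\Phi$ buys us: the curves $\{\alpha_i\}$ appearing in $\Phi$ can be simultaneously realized as disjoint, hence they bound a nested/side-by-side configuration, and in particular each $\alpha_i$ is isotopic to a boundary-parallel curve of a uniquely determined subsurface cut out by the holes it encloses \emph{and} the other curves of $\Phi$.

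Next I would set up the comparison with $\Phi'$. Write $\Phi = D_{\alpha_1}\cdots D_{\alpha_n}$ and $\Phi' = D_{\beta_1}\cdots D_{\beta_n}$ (the counts agree because homological equivalence fixes, for each hole $v$, the multiplicity $m(v)$, hence the total twist count $=\frac12\sum_v m(v)$ up to the bookkeeping of the outer boundary — more precisely the Euler-characteristic / Lefschetz-number count is determined homologically). Homological equivalence says that, as multisets, $\{[\alpha_i]\} = \{[\beta_j]\}$ in $H_1(P;\ZZ)$, i.e. the $\beta_j$ enclose the same holes as the $\alpha_i$, matched up one-to-one. The goal is to upgrade ``same enclosed holes'' to ``isotopic.'' I would argue hole-by-hole, or rather innermost-curve by innermost-curve: pick a curve $\alpha$ in $\Phi$ that is \emph{innermost}, meaning the subsurface it bounds in the disjoint configuration of $\Phi$ contains no other $\alpha_i$; then $\alpha$ is isotopic to the boundary of the subsurface determined by exactly the holes it encloses, which is canonical. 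The matching $\beta$ encloses the same holes, so $D_\beta$ and $D_\alpha$ have the same action on $H_1$; the point is to show $\beta$ is also isotopic to that canonical boundary curve and can be cancelled from both factorizations, after which one induces on the number of twists.

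The main obstacle — and the place where right-veering enters — is ruling out that $\beta$ is a ``complicated'' curve enclosing the right holes but wrapping around in a way not isotopic to $\alpha$. Here I would invoke the fact that $\phi$ is supported by a \emph{planar} open book with a positive factorization into \emph{disjoint} twists, so $\phi$ is as far from being a product of ``extra'' twists as possible: concretely, for any essential arc $a$ in $P$ whose endpoints lie on the outer boundary and which meets none of the $\alpha_i$ (such an arc exists whenever $\alpha$ is innermost around its holes, by choosing $a$ to separate those holes off), the monodromy $\phi$ sends $a$ to itself rel endpoints up to isotopy, i.e. $\phi(a)=a$. But if $\beta$ in $\Phi'$ were not isotopic to the canonical curve around those holes, then $D_\beta$, and hence $\phi = \Phi'$, would move $a$ strictly to the right (right-veering is preserved under composition of positive twists, and a positive twist about a curve hitting $a$ moves $a$ strictly right and cannot be undone by the other positive twists), contradicting $\phi(a)=a$. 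This forces $\beta$ to be disjoint from $a$ for every such arc $a$, which cuts $P$ down enough to conclude $\beta$ is isotopic to $\alpha$. Cancel $D_\alpha = D_\beta$ from $\Phi$ and $\Phi'$ (legitimate since a boundary-parallel-type twist about an innermost curve commutes past everything it needs to, or alternatively pass to the subsurface), reducing to a planar open book with strictly fewer twists, still with a disjoint positive factorization and still homologically rigid; induction finishes the proof. The delicate point to get right is the ``innermost'' bookkeeping — ensuring at each stage that both $\Phi$ and $\Phi'$ genuinely contain a matching innermost pair and that cancellation does not disturb the disjointness hypothesis for the remaining factorization — and the precise form of the right-veering obstruction that converts ``$\phi$ fixes the arc $a$'' into ``$\beta$ misses $a$.''
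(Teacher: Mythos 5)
Your overall strategy is the right one, and its core mechanism is the same as the paper's: arcs disjoint from the disjoint curves of $\Phi$ are fixed by $\phi$, and right-veering of positive twists (with the fact that composing right-veering maps only pushes arcs further to the right) forces every curve in $\Phi'$ to be disjoint from such arcs as well. However, there is a genuine gap in your choice of arcs. You take arcs $a$ with endpoints on the \emph{outer} boundary and claim that, for an innermost curve $\alpha$ of $\Phi$, one can choose such an $a$ separating the holes enclosed by $\alpha$ from the remaining holes while meeting none of the $\alpha_i$. This fails precisely in the situation relevant here: if some other curve $\alpha_j$ of $\Phi$ encloses $\alpha$ together with additional holes (nested configurations, which is exactly what the Gay--Mark factorization looks like), then any arc from the outer boundary to itself separating off exactly the holes of $\alpha$ must cross $\alpha_j$, so no admissible arc exists; arcs based on the outer boundary and disjoint from all of $\Phi$ are confined to the outermost complementary region and carry no information about the nested curves. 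Consequently the step ``$\beta$ misses every such arc, hence is isotopic to $\alpha$'' cannot even get started, and in any case ``cuts $P$ down enough'' is not justified. The paper sidesteps this by choosing arcs whose endpoints lie on the \emph{hole} boundaries enclosed by an innermost curve $\gamma_l$: such arcs live inside the innermost region, so they are automatically disjoint from all curves of $\Phi$, and the right-veering argument (which applies to arcs ending on any boundary component) then makes every $T_j$ of $\Phi'$ disjoint from them. Cutting along these arcs merges the enclosed holes into one, so that in the cut surface $P'$ the curve $\gamma_l$ and its homological partner become boundary-parallel around a single hole, where homology class does determine isotopy class; one cancels that twist (legitimately, since boundary-parallel twists are central in $P'$) and inducts on the number of non-boundary-parallel twists.

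A secondary issue: even granting your arcs, cancelling $D_\beta=D_\alpha$ from the middle of the word $\Phi'=D_{\beta_1}\cdots D_{\beta_n}$ is not automatic, since $\beta$ need not be disjoint from the other $\beta_j$ and the twist need not commute past them in $P$; your parenthetical appeal to commutation is only clearly valid for the curves of $\Phi$, not of $\Phi'$. The paper's device of passing to the cut surface, where the cancelled twist is boundary-parallel and hence central, is what makes the cancellation and the induction clean. So: right idea and right tool, but you need to relocate the arcs (endpoints on the enclosed holes, not the outer boundary) and run the induction in the cut-open surface for the argument to close.
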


\begin{proof} 
After reordering, we can write $\Phi=D_1 D_2 \dots D_l \delta_1 \delta_2 \dots \delta_n$, where $\delta_i$'s  are Dehn twists about the boundary-parallel curves, and   $D_1, D_2, \dots, D_l$ are the Dehn twists around disjoint curves 
 $\gamma_1, \dots, \gamma_l$ in $P$ 
that are not boundary parallel. 

Then, again after reordering, we have $\Phi'=T_1 T_2 \dots T_l \delta_1 \delta_2 \dots \delta_n$, where the Dehn twists $D_j$ and $T_j$ are performed about homologous curves in $P$:
indeed, every boundary-parallel curve $\gamma_j$ is determined by its homology class, uniquely up to isotopy.  
We can thus remove the Dehn twists $\delta_1, \delta_2, \dots, \delta_n$ from consideration. We will use the same notation, 
$\Phi=D_1 D_2 \dots D_l$ and  $\Phi'=T_1 T_2 \dots T_l$ for the two factorizations of the diffeomorphism 
\begin{equation}\label{eq:non-bdry}
 \phi=D_1 D_2 \dots D_l=T_1 T_2 \dots T_l.
\end{equation}

We will prove the lemma by induction on the number $l$ of the non-boundary parallel Dehn twists. Identifying $P$ with a disk with holes, 
we can assume  that $\gamma_l$ is an innermost curve in the collection $\gamma_1, \gamma_2, \dots, \gamma_l$. Suppose that $\gamma_l$ encloses $r$ holes. 
Choose a  collection of arcs $\eta_1, \eta_2, \dots \eta_{r-1}$ connecting these holes and disjoint from  $\gamma_l$, 
so that after cutting along these arcs, the holes become a single hole, and the domain enclosed by $\gamma_l$  becomes an annulus (which deformation retracts to $\gamma_l$).  See Figure~\ref{cutting-disk}.  By construction, the arcs $\eta_1, \eta_2, \dots \eta_{r-1}$ are disjoint from the support of each of the Dehn twists $D_1, D_2, \dots, D_l$, thus the diffeomorphism $\phi= D_1 D_2 \dots D_l$ fixes each of these arcs.

 As in \cite[Proposition~3]{BMVHM} and  \cite[Section 2]{Foss}, we now make the following key observation:  after an isotopy removing non-essential intersections, 
all arcs $\eta_1, \dots, \eta_{r-1}$ must be also disjoint from the support of each of the Dehn twists $T_1, T_2, \dots, T_l$.  To see this, we recall that each 
right-handed Dehn twist is a right-veering diffeomorphism of the oriented surface $P$, \cite{HKM}. If $\alpha$ and $\beta$ are two arcs with the same endpoint $x \in \partial S$, we say that $\beta$
lies to the right of $\alpha$ if the pair of tangent vectors $(\dot{\beta},\dot{\alpha})$ at $x$ gives the orientation of $P$.
The right-veering property of a boundary-fixing map $\tau: P\to P$ means that for every simple 
arc $\alpha$ with endpoints on $\partial P$, the image $\tau(\alpha)$ is either isotopic to $\alpha$ or lies to the right of $\alpha$ at both endpoints,
once all non-essential intersections between $\alpha$ and  $\tau(\alpha)$
are removed. Now, suppose that the support of the Dehn twist $T_j$ essentially intersects one of the arcs, 
say $\eta_1$. Then the curve  $T_j (\eta_1)$ is not isotopic to $\eta_1$ (see e.g. \cite[Proposition 3.2]{FM}), 
so $T_j (\eta_1)$ lies  to the right of $\eta_1$. Since the composition of right-veering maps is right-veering, 
we can only get curves that lie further to the right of $\eta_1$  after composing with  the other Dehn twists
$T_1, \dots, T_l$.  However, the composition $\phi= T_1 T_2\dots T_j \dots T_l$ fixes $\eta_1$, a contradiction.

\begin{figure}[htb]
		\centering
		\bigskip
		\labellist
		\small\hair 2pt
		\pinlabel $\eta_1$ at 26 20
		\pinlabel $\eta_2$ at 39 20
		\pinlabel $P$ at 8 84
		\pinlabel $P'$ at 133 84
		\pinlabel $D_l$ at 25 36 
		\pinlabel $D_l$ at 148 37
		\endlabellist
		\includegraphics[scale=1.8]{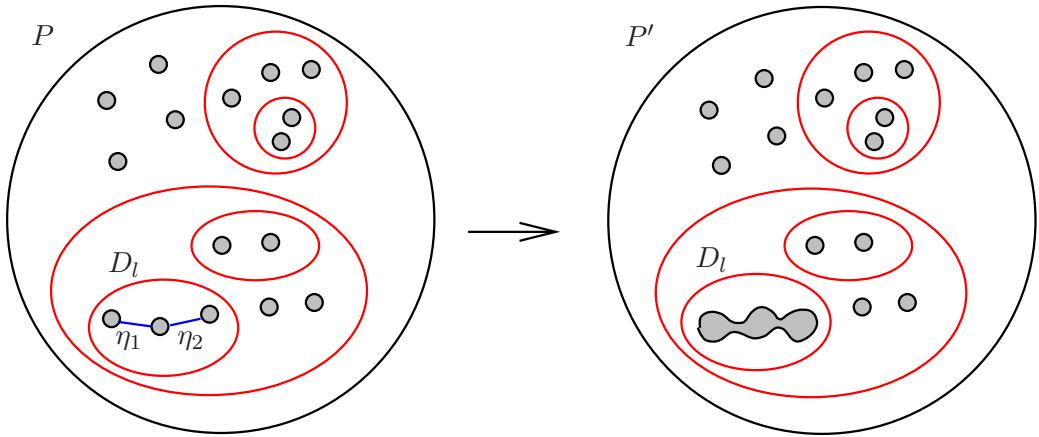}
		\caption{After cutting $P$ along arcs $\eta_1, \eta_2, \dots$, the Dehn twist $D_l$ becomes boundary-parallel in the new surface $P'$.} 
		\label{cutting-disk}
	\end{figure}

Once we know that the support of all the Dehn twists is disjoint from all of the arcs $\eta_1, \dots, \eta_{r-1}$,
we can cut the page $P$ along these arcs, and consider the image of the relation~\eqref{eq:non-bdry} in the resulting cut-up surface $P'$. In $P'$, 
we have that (the induced diffeomorphisms) $T_l$ and $D_l$ are  Dehn twists around the curve homologous to the boundary of the same hole, and therefore, $T_l=D_l$ as Dehn twists in $P'$. It follows that for the Dehn twists (induced by) $D_1, \dots, D_{l-1}$ and $T_1, \dots, T_{l-1}$ in $P'$, we have 
$$
D_1 D_2 \dots D_{l-1}=T_1 T_2 \dots T_{l-1}.
$$
By the induction hypothesis, we can conclude that for each $j=1, \dots, {l-1}$, the Dehn twists $D_j$ and $T_j$ are performed about isotopic curves in $P'$. It follows that each pair $D_j, T_j$  gives the same Dehn twists in $P$, for each $j=1, \dots, l$.
\end{proof}

\begin{proof}[Proof of Theorem~\ref{main}] Under the hypotheses of Theorem~\ref{main}, the contact 3-manifold 
$(Y, \xi)$ is supported by an open book with planar page $P$. Theorems of Wendl and Wendl--Niederkruger then imply that up to blow-up and deformation of the symplectic form, every weak symplectic filling has a Lefschetz fibration whose fiber is given by $P$; the monodromy of the fibration is the monodromy of the open book. The Lefschetz fibration is described by its vanishing cycles, or, equivalently, by a positive  factorization of the monodromy. Lemmas \ref{lem1} and \ref{lem2} show that the positive monodromy factorization is unique.   \end{proof}

\bibliography{references}
\bibliographystyle{alpha}

\end{document}